\newcommand{\R}{\mathds R}
\numberwithin{equation}{section}
\title[Unparameterized embeddings]{On the manifold structure of the set of unparameterized embeddings with low regularity}
\author[L. J. Al\'\i as]{ Luis J. Al\'\i as}
\address{Departamento de Matem\'{a}ticas, \hfill\break\indent
Universidad de Murcia, Campus de Espinardo\hfill\break\indent 30100 Espinardo,
Murcia, \hfill\break\indent Spain} \email{ljalias@um.es}
\author[P.\ Piccione]{Paolo Piccione}
\address{Departamento de Matemática,\hfill\break\indent
Universidade de S\~ao Paulo, \hfill\break\indent Rua do Mat\~ao
1010,\hfill\break\indent CEP 05508-900, S\~ao Paulo, SP, Brazil}
\email{piccione.p@gmail.com}
\subjclass[2000]{58J55, 35B32, 53C42}
\thanks{This work was partially supported by MEC project MTM2009-10418, and Fundaci\'{o}n S\'{e}neca
project 04540/GERM/06, Spain. This research is a result of the activity developed
within the framework of the Programme in Support of Excellence Groups of the
Regi\'{o}n de Murcia, Spain, by Fundaci\'{o}n S\'{e}neca, Regional Agency for
Science and Technology (Regional Plan for Science and Technology 2007-2010).
The research leading to this manuscript was carried out while the second author was
visiting the \emph{Departamento de Matem\'aticas} of the \emph{Universidad de Murcia}, Spain, as a
Visiting Professor. P.P. would like to thank faculties and staff of this institution for the
warm hospitality and for the excellent working conditions provided to him.}
\date{July 25th, 2010}
\begin{document}

% Theorems and such

\theoremstyle{plain}\newtheorem*{teon}{Theorem}
\theoremstyle{definition}\newtheorem*{defin*}{Definition}
\theoremstyle{plain}\newtheorem{teo}{Theorem}[section]
\theoremstyle{plain}\newtheorem{prop}[teo]{Proposition}
\theoremstyle{plain}\newtheorem{lem}[teo]{Lemma}
\theoremstyle{plain}\newtheorem{cor}[teo]{Corollary}
\theoremstyle{definition}\newtheorem{defin}[teo]{Definition}
\theoremstyle{remark}\newtheorem{rem}[teo]{Remark}
\theoremstyle{plain} \newtheorem{assum}[teo]{Assumption}
\swapnumbers
\theoremstyle{definition}\newtheorem{example}{Example}[section]
\theoremstyle{plain} \newtheorem*{acknowledgement}{Acknowledgements}
\theoremstyle{definition}\newtheorem*{notation}{Notation}

%%%%%

\begin{abstract}
Given manifolds $M$ and $N$, with $M$ compact, we study the geometrical structure
of the space of embeddings of $M$ into $N$, having less regularity than $\mathcal C^\infty$,
quotiented by the group of diffeomorphisms of $M$.
\end{abstract}

\maketitle

\begin{section}{Introduction}
A very general class of geometrical variational problems can be formulated in terms of
some action functional defined on the space $\mathrm{Emb}(M,N)$
of embeddings of a manifold $M$ into some other manifold $N$. In many interesting examples,
as for instance in the study of \emph{minimal} or \emph{constant mean curvature} embeddings
$x:M\to N$, the functionals involved do not depend on the parameterization $x$, i.e., they are
invariant by $\mathrm{Diff}(M)$ the diffeomorphism group of $M$ that acts by right composition
on the space of embeddings. Under these circumstances, given a solution $x:M\to N$ of the
variational problem, any embedding of the form $x\circ\phi$, with $\phi\in\mathrm{Diff}(M)$,
is also a solution of the problem, which is not geometrically distinct from $x$.
This implies in particular, that typical compactness assumptions, like the Palais--Smale condition,
obviously fail for parameterization invariant functionals. Namely, every critical level
of a parameterization invariant functional is non compact.
If one is interested in multiplicity results, like for instance Morse Theory or Bifurcation Theory, one
has to identify solutions that are not geometrically different.
There are several methods in the literature to get rid of the gauge invariance property in equivariant variational
problems. One method is to impose a \emph{gauge fixing condition}, in the language of \cite{PalaisTerng},
i.e., a smooth submanifold of the domain of the functional, which intersects all the orbits of the group action,
and on which the variational problem has no invariance properties.
A second method consists in determining an auxiliary functional, with \emph{the same} critical points
and which is no longer gauge invariant. This is illustrated well in the classical closed geodesic problem,
originally formulated using the length functional
in the space of immersions of the circle in a Riemannian manifold $N$. In this case, one
replaces the length functional by a quadratic energy functional, which is no longer parameterization
invariant, and has the same critical points.
Nonetheless, the same technique may not be available for variational problems in higher dimension, and
in this case the appropriate functional space to consider for the variational problem is the set of
\emph{unparameterized embeddings} of $M$ into $N$.
Two embeddings $x_1,x_2:MN\to N$ are said to be equivalent if there exists a diffeomorphism $\phi$ of $M$
such that $x_2=x_1\circ\phi$; an unparameterized embedding of $M$ into $N$ is an equivalence
class of embedding of $M$ into $N$.
Actions of the diffeomorphism group of a manifold have been studied in several contexts, and one
of the central questions is how to construct \emph{slices} for these actions.
The interested reader may look up \cite{Ebin} for the action on Riemannian metrics by pull-back,
or \cite{FreUhl} for gauge theory.

A crucial point is the choice of regularity for the embeddings. Namely, important properties
of the variational problem, like for instance the Palais--Smale condition, or the Fredholmness
condition for the second derivative, depend essentially on this choice.
The $\mathcal C^\infty$ case has been extensively studied (see \cite{Michor, Michor1, Michor2}),
and a nice Frechet differentiable structure has been described
for this set. The theory of manifolds modeled on general locally convex topological
vector spaces has been recently developed in detail in \cite{KriMich}.
Nevertheless, in view to applications in variational calculus, the Frechet
structure of $\mathcal C^\infty$ embeddings is too weak, and it is desirable to have
a geometry modeled on Banach or Hilbert spaces.
Usually, a natural choice would be to consider embeddings
of class $\mathcal C^k$, or $\mathcal C^{k,\alpha}$, with $k<\infty$ and $\alpha\in\left]0,1\right[$,
or some Sobolev regularity.
However, when a regularity weaker than $\mathcal C^\infty$ is assumed for the embeddings,
subtle obstructions arise when attempting to define a global differentiable
structure on the quotient space of embeddings modulo diffeomorphisms.
The problem is a consequence of the fact that, when $k<\infty$, the map of left-composition
with a fixed diffeomorphism of class $\mathcal C^k$ is \emph{not} a differentiable
map in the space of $\mathcal C^k$-maps. The transition maps of any natural atlas of charts
for the space of unparameterized embeddings involve this type of operations.

The point we address in this paper is precisely an analysis of the local and global
geometrical structure of the set of unparameterized embeddings having regularity
weaker than $\mathcal C^\infty$. We will show that, unlike the smooth case, such a set does not
have a natural global differentiable structure; nonetheless local and global techniques
from the Calculus of Variations can be applied for parameterization invariant functionals.
More precisely, we use Palais' notion of Vector Bundle Neighborhood (VBN) for describing
an atlas of charts for the set of unparameterized embeddings, whose transition functions
are continuous. Using these charts, the set of unparameterized embeddings is ``locally'' a smooth
submanifold of the space of embeddings.  The restriction of any parameterization
invariant smooth function on the space of embeddings defines a function on the
space of unparameterized embeddings which is smooth in any local chart.
Thus, one has a well defined notion of critical point, and we compute the first
and the second variation at a critical point of a parameterization invariant
smooth functional.
In the last section we also analyze regularity properties of the action of the isometry group of
the target manifold $N$ on the space of unparameterized embeddinds by left-composition.
This action is also not smooth, but in local charts its orbits are smooth embedded submanifolds.

\end{section}
\begin{section}{Notations and preliminaries}
Let us consider two  smooth (i.e., $\mathcal C^\infty$) manifolds $M^m$ and $N^n$, with $m<n$.
For simplicity, we will assume that $M$ is compact, although an analogous theory can be developed
also in the non compact case, along the lines of \cite{PicTau}.
We will fix throughout an auxiliary Riemannian metric $g$ on the target manifold $N$, and we will
denote by $\exp$ the corresponding exponential map. The metric $g$ induces a norm on every vector bundle
obtained by functorial construction from $TN$ (like pull-backs, normal bundles of embeddings into $N$, etc.).
The metric $g$ will be used only for a more explicit description of the manifold charts;
all the results of the present paper will not depend on the choice of such metric.

We will denote by $\mathfrak C$ a regularity class of maps defined on $M$. More precisely,
let $\mathfrak C(M,\mathds R)$ be a Banach space of maps from $M$ to $\mathds R$ such that
\[\mathcal C^\infty(M,\R)\subset\mathfrak C(M,\mathds R)\subset \mathcal C^1(M,\mathds R),\]  with \emph{dense} inclusion
$\mathcal C^\infty(M,\R)\hookrightarrow\mathfrak C(M,\mathds R)$ and \emph{continuous} inclusion
$\mathfrak C(M,\mathds R)\hookrightarrow \mathcal C^1(M,\mathds R)$.
We require that $\mathfrak C(M,\mathds R)$ be
stable under composition from the right with functions $f\in\mathcal C^\infty(M,M)$ (this action is linear),
and stable under composition from the left with functions $f\in\mathcal C^\infty(\mathds R,\mathds R)$.
We also assume that for all $f\in\mathcal C^\infty(\mathds R,\mathds R)$, the map $\mathfrak C(M,\mathds R)\ni g\mapsto g\circ f\in\mathfrak C(M,\mathds R)$
is smooth.

Typical examples of $\mathfrak C$ are:
\begin{itemize}
\item $\mathfrak C=\mathcal C^k$, with $k\ge1$;
\item $\mathfrak C=\mathcal C^{k,\alpha}$, with $k\ge1$ and $\alpha\in\left]0,1\right[$ (H\"older type regularity);
\item $\mathfrak C=W^{k,p}$, with $p(k-1)>m$ (Sobolev type regularity).
\end{itemize}
In several interesting examples, also non standard choices for the functor $\mathfrak C$ may
appear naturally, see Remark~\ref{thm:remseparabilita}. Thus, treating the subject in such generality is
not a useless abstraction.

A description of the differentiable structure of the set of maps $f:M\to N$ of class
$\mathfrak C$ can be given as follows.
Set $\mathfrak C(M,\R^d)=\oplus_{i=1}^d\mathfrak C(M,\R)$ and, given a subset $S\subset\R^d$,
denote by $\mathfrak C(M,S)$ the set of maps $f\in\mathfrak C(M,\R^d)$ such that $f(M)\subset S$.
Such set is endowed with the induced topology from $\mathfrak C(M,\R^d)$.
If $S$ is a submanifold of $\mathds R^d$, then $\mathfrak C(M,S)$ is a submanifold of $\mathfrak C(M,\mathds R^d)$.
Given a smooth embedding $\phi:N\to\R^d$, denote by $\mathfrak C(M,N,\phi)$ the set of all
maps $f:M\to N$ such that $\phi\circ f\in\mathfrak C\big(M,\phi(N)\big)$. The
map $\mathfrak C(M,N,\phi)\ni f\mapsto\phi\circ f\in\mathfrak C\big(M,\phi(N)\big)$ is a bijection,
and it induces a a Banach manifold structure on $\mathfrak C(M,N,\phi)$.
This differentiable structure is independent on $\phi$, i.e.,
given different embeddings $\phi_i:N\to\mathds R^{d_i}$, $i=1,2$, then $\mathfrak C(M,N,\phi_1)=\mathfrak C(M,N,\phi_2)$, and the differentiable structures induced by $\mathfrak C\big(M,\phi_1(N)\big)$
and $\mathfrak C\big(M,\phi_2(N)\big)$ coincide. We will therefore omit the symbol $\phi$ in the
notation of the set of maps $f:M\to N$ of class $\mathfrak C$, and we will write $\mathfrak C(M,N)$.
Given a smooth vector bundle $\pi:E\to M$, one also has a notion of \emph{sections of class $\mathfrak C$}
of $\pi$, defined in the obvious way.
\begin{rem}\label{thm:remseparabilita}
When the Banach space $\mathfrak C(M,\R)$ is not separable, as in the case $\mathfrak C=\mathcal C^{k,\alpha}$, with
$\alpha\in\left]0,1\right[$, then $\mathfrak C(M,N)$ is a non separable Banach manifolds.
There are theories where separability is an important issue, especially when Sard's theorem
needs to be invoked.
A situation of this type is considered in \cite{Whi2}, where the author proves a genericity result
in the space of $\mathcal C^{k,\alpha}$ embeddings. As suggested in \cite[\S~1.5]{Whi}, a possible way of circumventing the problem is to consider rather than the space $\mathcal C^{k,\alpha}$,
the closed subspace $\mathcal C^{k,\alpha+}$consisting of all $\mathcal C^{k,\alpha}$-limits of functions of class $\mathcal C^{k+1}$.
This space is separable with respect to the $\mathcal C^{k,\alpha}$-topology, and in fact it is second countable.
\end{rem}
By the assumption that the inclusion $\mathfrak C(M,\mathds R)\hookrightarrow \mathcal C^1(M,\mathds R)$ is continuous,
it follows that the (possibly empty) subset of $\mathfrak C(M,N)$ consisting of embeddings is open.
In next Section we will describe an explicit set of local charts for such set, given intrinsically, i.e.,
without using embeddings of $N$ into some Euclidean space.
\end{section}
\begin{section}{The manifold of embeddings}
\label{sec:diffstructure}
Classical references where the differentiable structure
of $\mathfrak C(M,N)$, or more generally of spaces of $\mathfrak C$-sections of fiber bundles\footnote{%
functions from $M$ to $N$ can be thought of as sections of the trivial fiber bundle $M\times N$.}
with compact base, has been described explicitly are \cite{Eliasson, Eells, Palais};  local charts of this
differentiable structure are described by Palais using the notion
of \emph{vector bundle neighborhood} (VBN). When the base is non compact, restrictions on the space
of sections are required in order to have a well defined Banach differentiable structure, see \cite{PicTau}.
In order to get a better insight on our problem,
let us recall how a global differentiable structure on $\mathfrak C(M,N)$ is
obtained, following the VBN approach of \cite{Palais}.
Given a Riemannian vector bundle $E$ over $M$ (i.e., a vector bundle endowed with a Riemannian
structure on the fibers and a compatible connection), we will denote by $\mathbf\Gamma(E)$ the
Banach space of all sections of class $\mathfrak C$ of $E$.
The essential property required for developing Palais' theory is the fact, proved in \cite{Palais}, that,
given a compact manifold $M$, two Riemannian vector bundles $E_1,E_2$ over $M$, and a smooth
vector bundle morphism $\Phi:E_1\to E_2$, the composition operator $\mathbf\Gamma(E_1)\ni s\mapsto\Phi\circ s\in
\mathbf\Gamma(E_2)$ is a smooth map.
The idea of vector bundle neighborhoods is that
suitable small $\mathfrak C$-neighborhoods of a given map $x:M\to N$ of class $\mathcal C^\infty$ are parameterized by elements in
neighborhoods of the zero section of the pull-back bundle $x^*(TN)$ over $M$. More precisely,
once a Riemannian metric $g$ with Levi--Civita connection $\nabla$ in $N$ is fixed,
a local chart $\Phi$ of $\mathfrak C(M,N)$ around a given smooth function $x$ is obtained by
associating to each section $u$ of class $\mathfrak C$ of the vector bundle $x^*(TN)$ the map
$y:M\to N$ defined by $y(p)=\exp_{x(p)}\big(u(p)\big)$, where $\exp$ is the exponential
map of $\nabla$. The inverse of the map that associates to each $u$ the corresponding $y$ defines
a local chart from an open neighborhood of the zero section of $x^*(TN)$ to an open neighborhood
of $x$, that will be denoted by $\Phi_x$.
The transition maps for charts in this atlas are computed as follows.
Given smooth maps $x_1,x_2:M\to N$, for $i=1,2$ consider the map
$\mathrm{EXP}_i:x_i^*(TN)\to M\times N$  defined by $\mathrm{EXP}_i(p,v)=\big(p,\exp_{x_i}(v)\big)$, $v\in T_{x_i(p)}N$.
This gives a smooth diffeomorphism of an open subset containing the zero section of $x_i^*(TN)$ onto
an open neighborhood of the graph of $x_i$; the composition $\zeta=\mathrm{EXP}_2^{-1}\circ\mathrm{EXP}_1$
is a smooth diffeomorphisms between two open neighborhood of the zero sections of the vector bundles
$x_1^*(TN)$ and $x_2^*(TN)$ \emph{that preserves the fibers}. The transition map $\Phi_{x_1}^{-1}\circ\Phi_{x_2}$
is given by left-composition with the smooth map $\zeta$, and thus it is differentiable
(compare with the situation described in Remark~\ref{thm:remlossofdiff}).
Moreover, when $x$ varies in the set of smooth functions,
the domain of these charts cover the entire $\mathfrak C(M,N)$, as we are assuming density of the
inclusion $\mathcal C^\infty(M,\R)\hookrightarrow\mathfrak C(M,\R)$.
Hence, the collection of all such charts defines a differentiable atlas on $\mathfrak C(M,N)$.
Given a smooth map $x:M\to N$, the tangent space $T_x\mathfrak C(M,N)$ is identified, via the
chart $\Phi_{x}$, with the space of all sections of class $\mathfrak C$ of the pull-back bundle $x^*(TN)$.

The subset $\mathrm{Emb}(M,N)$ of $\mathfrak C(M,N)$ consisting of all embeddings
$x:M\to N$ is open, and thus it inherits a natural Banach manifold structure from
$\mathfrak C(M,N)$. One can consider the set $\mathrm{Diff}(M)$,
which is  the set of all diffeomorphisms $\phi:M\to M$ of class $\mathfrak C$;
observe that $\mathrm{Diff}(M)$ may fail to be closed under composition or inverse, so
that in general it is not a group.
$\mathrm{Diff}(M)$ is an open subset of $\mathrm{Emb}(M,M)$, and thus
it inherits a natural differentiable structure.
However, even under the assumption that $\mathrm{Diff}(M)$ is closed under
composition and inverse, \emph{neither one of the two operations is differentiable}.
Namely, the left-composition map $\phi\mapsto x\circ\phi$ on $\mathrm{Diff}(M)$ in general is not
of class $\mathcal C^1$ (see \cite[Appendix]{Whi}).
Similarly, the derivative of the map $\phi\mapsto\phi^{-1}$ involves the derivative of $\phi$,
and thus this is not differentiable at those points $\phi$ whose derivative is not of class $\mathfrak C$.
\end{section}
\begin{section}{The manifold of unparameterized embeddings}\label{sub:tilde}
Two embeddings $x,y:M\to N$ will be considered equivalent if there exists a $\mathcal C^1$-diffeomorphism
$\phi:M\to M$ such that $y=x\circ\phi$, i.e., if they are different parameterizations of the same
submanifold of $N$ diffeomorphic to $M$. If $x$ and $y$ are of class $\mathcal C^k$, then
such diffeomorphism $\phi$ will also be of class $\mathcal C^k$.
For $x\in\mathrm{Emb}(M,N)$, we will denote by $[x]$ the class of all
$y\in\mathrm{Emb}(M,N)$ that are equivalent to $x$.
\begin{defin}
The set of \emph{unparameterized embeddings of class $\mathfrak C$ of $M$ into $N$}, denoted by
$\widetilde{\mathrm{Emb}}(M,N)$, is the set:
\[\widetilde{\mathrm{Emb}}(M,N)=\big\{[x]:x\in\mathrm{Emb}(M,N)\big\}.\]
\end{defin}
Thus, $\widetilde{\mathrm{Emb}}(M,N)$ can be thought as the set of all embedded submanifolds
of class $\mathfrak C$ of $N$ that are $\mathfrak C$-diffeomorphic to $M$. We will now establish an infinite dimensional
Banach \emph{topological} structure on $\widetilde{\mathrm{Emb}}(M,N)$, and we will describe
suitable local charts of this structure.

Let $x:M\to N$ be a smooth embedding; a local chart $\widetilde\Phi:\widetilde{\mathcal U}_{x}\to
\widetilde{\mathcal W}_{x}$
in $\widetilde{\mathrm{Emb}}(M,N)$, where $\widetilde{\mathcal U}_{x}$ is an appropriate neighborhood
of $[x]$ in $\widetilde{\mathrm{Emb}}(M,N)$, ${\mathcal W}_{x}$ is an appropriate $\mathfrak C$-neighborhood
of the zero section of the normal bundle of $x$, is given
as follows. There exists an open subset $U$ of the normal bundle $x^\perp$ containing the zero section
of this bundle, and an open subset $V$ of $N$ containing the image $x(M)$ such that
the restriction of $\exp$ to $U$ gives a diffeomorphism from $U$ to $V$.
The space $\mathbf\Gamma(x^\perp)$ of all sections of class $\mathfrak C$ of the normal bundle $x^\perp$ is a
Banach space, and the subset $\mathbf\Gamma(x^\perp;U)$ of $\mathbf\Gamma(x^\perp)$ consisting
of all sections whose image is contained in $U$ is open. A map
$\widetilde\Psi_{x}:\mathbf\Gamma(x^\perp;U)\to\widetilde{\mathrm{Emb}}(M,N)$ is obtained
by setting $\widetilde\Psi_{x}(u)=[y]$, where $y(p)=\exp_{x(p)}\big(u(p)\big)$ for all $p\in M$.
Clearly, $y$ is an embedding of class $\mathfrak C$ of $M$ into $N$, since $u$ is an embedding of class $\mathfrak C$
of $M$ into the normal bundle $x^\perp$, and $\exp$ is a diffeomorphism from $U$ to $V$.
It is easy to see that $\widetilde\Psi_{x}$ is injective. In order to prove this, first observe that
two embeddings $x_1,x_2\in\mathrm{Emb}(M,N)$ are equivalent
if and only if $x_1(M)=x_2(M)$. Now, observe that two distinct sections $u_1,u_2\in\mathbf\Gamma(x^\perp;U)$
must have distinct images in $U$, and thus their composition with $\exp$ are also different in $V$.
This proves that $\widetilde\Psi_{x}$ is injective. The image of $\widetilde\Psi_{x}$ is
the projection onto $\widetilde{\mathrm{Emb}}(M,N)$ of an open neighborhood of $x$ in ${\mathrm{Emb}}(M,N)$.
If $y\in{\mathrm{Emb}}(M,N)$ is near $x$, in particular it has image contained in $U$, then
$\exp^{-1}\big(y(M)\big)$ is the image of a section $u$ of $x^\perp$ of class $\mathfrak C$; then,
$\widetilde\Psi_{x}(u)=[y]$.
Thus, the map $\Psi$ is a bijection from an open subset ${\mathcal W}_{x}$ of $\mathbf\Gamma(x^\perp)$
containing the zero section, to a subset ${\mathcal U}_{x}$ of $\widetilde{\mathrm{Emb}}(M,N)$
given by the projection onto $\widetilde{\mathrm{Emb}}(M,N)$
of an open neighborhood of $x$ in ${\mathrm{Emb}}(M,N)$. Its inverse will be denoted by $\widetilde\Phi_{x}$,
and the collection of such maps, as $x$ varies in the set of all smooth embeddings of $M$ into $N$
is taken as an atlas of charts for $\widetilde{\mathrm{Emb}}(M,N)$.

We note however that \emph{there is no differentiable compatibility between two charts in this atlas}, i.e.,
the transition maps are in general \emph{not} differentiable, but only continuous.
Let us compute a transition map. Denote by $x_1,x_2:M\to N$ two smooth embeddings such that
the classes $[x_1]$ and $[x_2]$ belong to the intersection of the domains $\widetilde{\mathcal U}_{x_1}\cap
\widetilde{\mathcal U}_{x_2}$
of the charts  $\widetilde\Phi_{x_1}$ and $\widetilde\Phi_{x_2}$.
Denote by $\exp_1$, $\exp_2$ the exponential map of $g$ restricted
to the normal bundles $x_1^\perp$ and $x_2^\perp$ respectively, that are diffeomorphisms
between open subsets containing the zero section and tubular neighborhoods of the images $x_1(M)$
and $x_2(M)$ respectively.
Thus, there are open subsets $U_i\subset x_i^\perp$ containing the zero section
such that the map $\zeta:U_1\to U_2$ given by $\zeta=\exp_2^{-1}\circ\exp_1$ is a smooth diffeomorphism.
Let $u\in\widetilde{\mathcal W}_{x_1}\cap\widetilde{\mathcal W}_{x_2}$ be fixed and set
$u'=\widetilde\Phi_{x_2}\big(\widetilde\Phi_{x_1}^{-1}(u)\big)$.
\begin{rem}\label{thm:remlossofdiff}
The key observation here is that, in spite of the fact that the section $u'$ of the normal bundle $x_2^\perp$
has the same image of the map $\zeta\circ u$, the latter is \emph{not} a section of $x_2^\perp$.
This depends on the fact that the diffeomorphism $\zeta$ is not a vector bundle morphism
as in the case of the charts of $\mathrm{Emb}(M,N)$ (Section~\ref{sec:diffstructure}), i.e., it does
not take fibers of $x_1^\perp$ into fibres of $x_2^\perp$. In order to obtain the section $u'$, an \emph{adjustment}
needs to be done in the domain of $\zeta\circ u$, which is obtained by composition on the right with a diffeomorphism
of the base $M$ that depends on $u$; it is precisely such adjustment that causes the loss of differentiability of the
transition maps.
\end{rem}
The following formula holds:
\[u'=\zeta\circ u\circ h_u^{-1},\]
where $h_u:M\to M$ is the diffeomorphism:
\[h_u=\pi_2\circ\zeta\circ u,\]
$\pi_2:E_2\to M$ being the projection of the vector bundle $E_2$ over the base manifold $M$.
Now, the maps $u\mapsto\zeta\circ u$ and $u\mapsto h_u$ are $\mathcal C^\infty$, but the function $h\to h^{-1}$ is
not differentiable in $\mathrm{Diff}(M)$ where $h$ is only of class $\mathfrak C$,
as well as the function of composing on
the left with $\zeta\circ u$, when $u$ is only of class $\mathfrak C$. Thus, the map $u\mapsto u'$ is
continuous, but not differentiable.

We can then define a unique topology on $\widetilde{\mathrm{Emb}}(M,N)$ whose basis is the collection
of the domains $\widetilde{\mathcal U}_{x}$ of the charts $\widetilde\Phi_{x}$, as $x$ varies in the set
of smooth embeddings of $M$ into $N$, and by requiring that each $\widetilde\Phi_{x}$ is a homeomorphism
onto its image. It is easy to see\footnote{Consider the restriction of $\widetilde\pi$ to the inverse
image $\widetilde\pi^{-1}\big(\widetilde{\mathcal U}_{x}\big)$ of the domain of some chart.
Then, such restriction is continuous, open (because it admits continuous local sections with arbitrarily prescribed
values at a given point), and surjective, hence it
is a quotient map.} that this topology is exactly the quotient topology induced by
the canonical quotient map $\widetilde\pi:{\mathrm{Emb}}(M,N)\to\widetilde{\mathrm{Emb}}(M,N)$.

The reader should observe that the charts $\Phi_{x}$ in $\mathrm{Emb}(M,N)$ and
$\widetilde\Phi_{x}$ in $\widetilde{\mathrm{Emb}}(M,N)$ look very much alike.
The only difference is that $\widetilde\Phi_{x}$ takes values in the space of sections
of the normal bundle $x^\perp$, while $\Phi_{x}$ takes values in the spaces of sections of $x^*(TM)$.
If we identify\footnote{%
We will identify the pull-back bundle $x^*(TN)$ with the Whitney sum $\mathrm dx(TM)\oplus x^\perp$.}
$x^\perp$ with a subbundle of $x^*(TN)$, then this suggests that, roughly speaking,
``locally $\widetilde{\mathrm{Emb}}(M,N)$ is a smooth submanifold of $\mathrm{Emb}(M,N)$''.
Let us state this in a more precise way:
\begin{prop}
For $x$ varying in the set of smooth embeddings of $M$ into $N$, the family
$\Big\{\big(\widetilde{\mathcal U}_{x},\widetilde\Phi_{x}\big)\Big\}_x$
is an atlas of charts of $\widetilde{\mathrm{Emb}}(M,N)$, whose domains form an open cover of
$\widetilde{\mathrm{Emb}}(M,N)$,
and that makes $\widetilde{\mathrm{Emb}}(M,N)$ into an infinite dimensional \emph{topological} manifold
modeled on the Banach space $\mathfrak C(M,\R^{n-m})$.

The canonical projection $\widetilde\pi:{\mathrm{Emb}}(M,N)\to\widetilde{\mathrm{Emb}}(M,N)$
is a quotient map.

For a given smooth embedding $x:M\to N$, by identifying
the normal bundle $x^\perp$ with a subbundle of
the pull-back $x^*(TN)$, then the local chart $\Phi_{x}$ of $\mathrm{Emb}(M,N)$ around
$x$ and the local chart $\widetilde{\mathrm{Emb}}(M,N)$ around $[x]$ allow an identification
of the neighborhood $\widetilde U(x)$ of $[x]$ with the smooth submanifold of $\mathrm{Emb}(M,N)$
consisting of those $\mathfrak C$-embeddings in the domain of the chart $\Phi_{x}$ for which $\Phi_{x}$
takes values in the space of sections of the normal bundle $x^\perp$.\qed
\end{prop}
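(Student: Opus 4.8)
The plan is to assemble the facts already established in this section --- that each $\widetilde\Psi_x$ is a well-defined injection, that two $\mathfrak C$-embeddings are equivalent exactly when they have the same image, and the explicit transition formula $u'=\zeta\circ u\circ h_u^{-1}$ with $h_u=\pi_2\circ\zeta\circ u$ --- and to organize them into the three assertions. The substantial external inputs are: density of $\mathcal C^\infty(M,\R)$ in $\mathfrak C(M,\R)$; smoothness of left-composition with a fixed \emph{smooth} fibre map and of Palais' section functor $\mathbf\Gamma$; and the \emph{continuity}, \emph{not} differentiability, of the operations $(f,\phi)\mapsto f\circ\phi$ on $\mathbf\Gamma(E)\times\mathrm{Diff}(M)$ and $\phi\mapsto\phi^{-1}$ on $\mathrm{Diff}(M)$ in the regularity class $\mathfrak C$. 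I expect this last point to be the crux: it is exactly what rescues the transition maps --- and the straightening-to-a-section map used below --- in spite of the loss of differentiability isolated in Remark~\ref{thm:remlossofdiff}, and it requires the finer continuity properties of $\mathfrak C$-manifolds of mappings rather than any naive estimate.

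\textbf{Topology, charts and the open cover.} First I would equip $\widetilde{\mathrm{Emb}}(M,N)$ with the quotient topology induced by $\widetilde\pi$, so that the second assertion holds by construction; the footnote's observation that the restriction of $\widetilde\pi$ over each $\widetilde{\mathcal U}_x$ is continuous, open and surjective then shows this coincides with the topology generated by the charts. Fix a smooth embedding $x$. The set $\widetilde\pi^{-1}(\widetilde{\mathcal U}_x)$ consists of the $\mathfrak C$-embeddings $y$ with $y(M)\subset V$ whose image meets each fibre of $\exp|_U$ exactly once; both conditions are $\mathcal C^1$-open (for $M$ compact, a $\mathcal C^1$-map $\pi\circ\exp^{-1}\circ y$ close to $\mathrm{id}_M$ is a diffeomorphism), hence $\mathfrak C$-open by continuity of $\mathfrak C\hookrightarrow\mathcal C^1$, so $\widetilde{\mathcal U}_x$ is open. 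The map $y\mapsto(\exp^{-1}\circ y)\circ(\pi\circ\exp^{-1}\circ y)^{-1}$, sending $y$ to the normal section whose graph is $\exp^{-1}(y(M))$, is continuous on $\widetilde\pi^{-1}(\widetilde{\mathcal U}_x)$ --- a composition of left-composition with the smooth $\exp^{-1}$, right-composition, and inversion, all continuous --- and is constant on the fibres of $\widetilde\pi$ since equivalent embeddings share their image; by the universal property of the quotient it descends to a continuous map, namely $\widetilde\Phi_x$. Its inverse $\widetilde\Psi_x$ is $\widetilde\pi$ composed with $u\mapsto\exp\circ u$, the smooth restriction of $\Phi_x^{-1}$ to normal sections, hence continuous; thus $\widetilde\Phi_x$ is a homeomorphism onto the open set $\widetilde{\mathcal W}_x\subset\mathbf\Gamma(x^\perp)$. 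The domains cover: given $[z]$ with $z$ a $\mathfrak C$-embedding, density yields a smooth embedding $x$ that is $\mathcal C^1$-close to $z$ (embeddings being an open set), and then $z(M)$ lies in a tubular neighbourhood of $x(M)$ and is the graph of a $\mathfrak C$-section of $x^\perp$, so $[z]\in\widetilde{\mathcal U}_x$. Finally the transition formula presents $u\mapsto u'$ as a finite composition of the smooth operators $u\mapsto\zeta\circ u$ and $u\mapsto h_u$ with inversion and right-composition, hence continuous; by symmetry every transition map is a homeomorphism between open subsets of $\mathbf\Gamma(x^\perp)$, so the family is a topological atlas.

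\textbf{Model space and Hausdorffness.} Each $\mathbf\Gamma(x^\perp)$ is a Banach space linearly isomorphic to $\mathfrak C(M,\R^{n-m})$ --- the section space of a rank-$(n-m)$ bundle over the compact manifold $M$ --- so $\widetilde{\mathrm{Emb}}(M,N)$ is a topological manifold modeled on $\mathfrak C(M,\R^{n-m})$. For the Hausdorff property I would use the map $z\mapsto z(M)$ from $\mathrm{Emb}(M,N)$, with the $\mathfrak C$ --- a fortiori $\mathcal C^0$ --- topology, into the space $\mathcal K(N)$ of non-empty compact subsets of $N$ with the Hausdorff metric; it is continuous, constant on equivalence classes, and induces an \emph{injective} continuous map $\widetilde{\mathrm{Emb}}(M,N)\to\mathcal K(N)$ because two embeddings with the same image are equivalent. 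A space that injects continuously into a metric space is Hausdorff, which completes the proof that $\widetilde{\mathrm{Emb}}(M,N)$ is a topological manifold, and $\widetilde\pi$ is a quotient map by the choice of topology.

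\textbf{The local submanifold.} For the last assertion, fix a smooth embedding $x$ and use the splitting $x^*(TN)=\mathrm dx(TM)\oplus x^\perp$. The fibrewise projections onto the two summands are smooth vector bundle morphisms, so by Palais' lemma they induce bounded projections on section spaces, giving a topological direct sum $\mathbf\Gamma(x^*(TN))=\mathbf\Gamma(\mathrm dx(TM))\oplus\mathbf\Gamma(x^\perp)$; thus $\mathbf\Gamma(x^\perp)$ sits inside $\mathbf\Gamma(x^*(TN))$ as a closed, complemented, hence split, subspace. After shrinking the tubular neighbourhood so that the domains of $\Phi_x$ and $\widetilde\Phi_x$ are compatible, the preimage under $\Phi_x$ of the intersection of its open domain with $\mathbf\Gamma(x^\perp)$ is, by the submanifold property of charts of $\mathrm{Emb}(M,N)$ (pullback of a split subspace by a chart), a smooth submanifold $S_x\subset\mathrm{Emb}(M,N)$ --- precisely the $\mathfrak C$-embeddings near $x$ for which $\Phi_x$ takes values in normal sections. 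On $S_x$, $\Phi_x(y)$ is exactly the straightened normal section of $y$, so $\widetilde\pi|_{S_x}=\widetilde\Psi_x\circ\Phi_x|_{S_x}$; since $\Phi_x|_{S_x}$ is a homeomorphism onto $\widetilde{\mathcal W}_x$ and $\widetilde\Psi_x$ is a homeomorphism onto $\widetilde U(x)=\widetilde{\mathcal U}_x$, the restriction $\widetilde\pi|_{S_x}\colon S_x\to\widetilde U(x)$ is a homeomorphism, which is the asserted identification.
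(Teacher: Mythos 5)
Your proposal is correct and follows essentially the same route as the paper, which proves the proposition through the preceding construction in Section~\ref{sub:tilde}: injectivity of $\widetilde\Psi_x$ via the characterization of equivalence by equality of images, coverage by density of smooth embeddings, continuity (but not differentiability) of the transition maps $u\mapsto u'=\zeta\circ u\circ h_u^{-1}$, the footnote's argument that $\widetilde\pi$ is a quotient map, and the identification of $\widetilde{\mathcal U}_x$ with the normal-section slice inside the chart $\Phi_x$. Your only additions --- the explicit Hausdorffness check via the Hausdorff metric on compact subsets and the spelled-out complemented-subspace argument for the slice $S_x$ --- are harmless refinements of details the paper leaves implicit, not a different method.
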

The local identification of $\widetilde{\mathrm{Emb}}(M,N)$ with submanifolds of ${\mathrm{Emb}}(M,N)$
is particularly useful for studying smooth maps.
\begin{cor}\label{thm:corregularityinvfunct}
Let $\mathfrak Z$ be an arbitrary manifold and $f:\mathrm{Emb}(M,N)\to\mathfrak Z$
be a smooth function such that $f(x)=f(y)$ for all
pairs of equivalent embeddings $x,y\in\mathrm{Emb}(M,N)$. Then, given any smooth embedding $x:M\hookrightarrow N$,
considering the local chart $\big(\widetilde{\mathcal U}(x),\widetilde\Phi_{x}\big)$ of $\widetilde{\mathrm{Emb}}(M,N)$, the composition
$\widetilde f_{x}=f\circ\widetilde\Phi_{x}^{-1}:
\widetilde\Phi_{x}\big(\widetilde{\mathcal U}_{x}\big)\to\mathfrak Z$
is smooth.

\noindent
If $\mathfrak Z=\R$, then $u=\widetilde\Phi_{x}([y])$ is a critical point
of $\widetilde f_{x}$ if and only if $y$ is a critical point of $f$.
\end{cor}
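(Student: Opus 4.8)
The plan is to read off the statement of Corollary~\ref{thm:corregularityinvfunct} as a direct consequence of the preceding Proposition, exploiting the local identification of $\widetilde{\mathrm{Emb}}(M,N)$ with a smooth submanifold of $\mathrm{Emb}(M,N)$. First I would fix a smooth embedding $x:M\hookrightarrow N$ and unwind the definitions: by the Proposition, the chart $\widetilde\Phi_{x}$ identifies a neighborhood $\widetilde{\mathcal U}_{x}$ of $[x]$ with the set $\mathcal S_x\subset\mathrm{Emb}(M,N)$ of those $\mathfrak C$-embeddings $y$ in the domain of $\Phi_x$ for which $\Phi_x(y)$ is a section of the normal subbundle $x^\perp\subset x^*(TN)$. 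Concretely, for $u\in\mathbf\Gamma(x^\perp;U)$ the chart is $\widetilde\Phi_x^{-1}(u)=[y_u]$ with $y_u(p)=\exp_{x(p)}(u(p))$, and $y_u$ lies in the domain of $\Phi_x$ with $\Phi_x(y_u)=u$. Hence $\widetilde\Phi_x^{-1}=\widetilde\pi\circ\Phi_x^{-1}\circ\iota$, where $\iota:\mathbf\Gamma(x^\perp;U)\hookrightarrow T_x\mathfrak C(M,N)$ is the inclusion induced by $x^\perp\subset x^*(TN)$, which is a smooth (indeed linear, bounded) map, and $\Phi_x^{-1}$ is a smooth chart of $\mathrm{Emb}(M,N)$.

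Next I would assemble the composition. Since $f$ is constant on equivalence classes, $f$ descends to a well-defined map on $\widetilde{\mathrm{Emb}}(M,N)$, and in the chart $\widetilde\Phi_x$ we have
\[
\widetilde f_x \;=\; f\circ\widetilde\Phi_x^{-1}\;=\;f\circ\Phi_x^{-1}\circ\iota,
\]
because $f(\widetilde\pi(z))=f(z)$ for any representative $z$, and $y_u=\Phi_x^{-1}(\iota(u))$ is such a representative of $\widetilde\Phi_x^{-1}(u)$. Now $f\circ\Phi_x^{-1}$ is the local representative of the smooth map $f$ in a smooth chart of the Banach manifold $\mathrm{Emb}(M,N)$, hence smooth on its (open) domain in $T_x\mathfrak C(M,N)\cong\mathbf\Gamma(x^*(TN))$; precomposing with the bounded linear inclusion $\iota$ preserves smoothness. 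Therefore $\widetilde f_x$ is smooth on $\widetilde\Phi_x(\widetilde{\mathcal U}_x)\subset\mathbf\Gamma(x^\perp)$, which is the first assertion. Note this is exactly the place where the restriction to the normal subbundle matters: on $\mathcal S_x$ the transition from $\widetilde{\mathrm{Emb}}$-charts to $\mathrm{Emb}$-charts is the smooth inclusion $\iota$, not the troublesome reparameterization map of Remark~\ref{thm:remlossofdiff}; the latter only enters when comparing two different $\widetilde\Phi_{x_i}$.

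For the last statement, with $\mathfrak Z=\R$, I would argue by the chain rule together with the gauge-invariance of $f$. Let $u=\widetilde\Phi_x([y])$, so $y$ is (equivalent to) $\Phi_x^{-1}(\iota(u))$. By the above, $d\widetilde f_x(u)=d(f\circ\Phi_x^{-1})(\iota(u))\circ\iota$, so if $y$ (equivalently $\Phi_x^{-1}(\iota(u))$) is a critical point of $f$ then $d(f\circ\Phi_x^{-1})(\iota(u))=0$ and hence $d\widetilde f_x(u)=0$. Conversely, suppose $d\widetilde f_x(u)=0$, i.e.\ $d(f\circ\Phi_x^{-1})(\iota(u))$ annihilates the subspace $\iota(\mathbf\Gamma(x^\perp))=\mathbf\Gamma(x^\perp)$ of $T_{y}\mathrm{Emb}(M,N)\cong\mathbf\Gamma(y^*(TN))$ (after the identification given by $\Phi_x$). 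It remains to see that the derivative of $f$ at $y$ also annihilates the complementary ``tangential'' directions $\mathrm dy(TM)$ — and this is forced by the invariance $f(y\circ\phi)=f(y)$: differentiating the path $t\mapsto y\circ\phi_t$ generated by any vector field on $M$ shows $df(y)$ vanishes on all sections of $\mathrm dy(TM)$. Combining, $df(y)=0$ on all of $T_y\mathrm{Emb}(M,N)=\mathrm dy(TM)\oplus y^\perp$, i.e.\ $y$ is a critical point of $f$. The one technical point to handle with a little care is that $\phi_t$ must be taken of class $\mathfrak C$ (so the path stays in $\mathrm{Emb}(M,N)$) and the curve $t\mapsto y\circ\phi_t$ must be shown to be differentiable \emph{in $t$} with the expected derivative; this is fine because differentiating with respect to the \emph{real} parameter $t$ only uses that $y$ is $\mathfrak C\subset\mathcal C^1$, and does not require left-composition to be a differentiable operation on the infinite-dimensional manifold — that is the crux, and the only place where one must resist the temptation to invoke smoothness of an operation that the paper has just warned is unavailable.
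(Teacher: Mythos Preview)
Your approach is the same as the paper's: smoothness of $\widetilde f_x$ follows because it is the restriction of the smooth map $f_x=f\circ\Phi_x^{-1}$ to the closed linear subspace $\mathbf\Gamma(x^\perp)\subset\mathbf\Gamma\big(x^*(TN)\big)$, and the critical-point equivalence is obtained by combining the vanishing of $df(y)$ on orbit-tangent (``reparameterization'') directions with the complementarity of the slice.

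There is, however, one imprecision in your converse argument---and it is exactly the subtlety the paper flags with its parenthetical remark about $E$ not being $x^\perp$ when $u\ne0$. You establish that $df(y)$ vanishes on $\mathbf\Gamma\big(\mathrm dy(TM)\big)$ (by invariance) and on the image of $\mathbf\Gamma(x^\perp)$ under $d\Phi_x^{-1}(u)$ (by hypothesis), and then conclude by writing $T_y\mathrm{Emb}(M,N)=\mathbf\Gamma\big(\mathrm dy(TM)\big)\oplus\mathbf\Gamma(y^\perp)$. But the second summand on which you have actually shown vanishing is \emph{not} $\mathbf\Gamma(y^\perp)$; for $u\ne0$ the subspace $d\Phi_x^{-1}(u)\big(\mathbf\Gamma(x^\perp)\big)$ and $\mathbf\Gamma(y^\perp)$ differ. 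What you need is that $\mathbf\Gamma\big(\mathrm dy(TM)\big)$ together with $d\Phi_x^{-1}(u)\big(\mathbf\Gamma(x^\perp)\big)$ spans $T_y\mathrm{Emb}(M,N)$. This holds, and the reason is the section property of $u$: since $\pi\circ u=\mathrm{id}_M$, at each $p$ one has $T_{u(p)}(x^\perp)=\mathrm du_p(T_pM)\oplus x^\perp_p$ (horizontal plus vertical), and pushing forward by the diffeomorphism $\exp\vert_U$ gives
\[
T_{y(p)}N=\mathrm dy_p(T_pM)\oplus d(\exp_{x(p)})_{u(p)}\big(x^\perp_p\big).
\]
Once this transversality is stated, your argument (and the paper's) goes through.
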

\begin{proof}
The map $\widetilde f_{x}$ is the restriction to the subspace of $\mathfrak C$-sections of the normal
bundle $x^\perp$ of the smooth function $f_{x}=f\circ\Phi_{x}^{-1}$, thus $\widetilde f_{x}$ is smooth.
For $u\in\widetilde\Phi_{x}\big(\widetilde{\mathcal U}_{x}\big)$, the tangent space at $u$ of
the space of $\mathfrak C$-sections of the bundle $x^*(TN)$ is identified with the space of sections of
some vector subbundle $E$ of $x^*(TN)$ complementary to $\mathrm dx(TM)$ (if $u\ne0$, then $E$
will not necessarily be the normal bundle $x^\perp$).
The invariance property of $f$ says that $\mathrm df_{x}$ vanishes on sections
of the bundle $\mathrm dx(TM)$, from which it follows easily that $u=\widetilde\Phi_{x}([y])$ is a critical point of
$\widetilde f_{x}$ if and only if $y$ is a critical point of $f$.
\end{proof}
\begin{rem}\label{thm:remdiffwidetildeEmbk}
Note that the result of Corollary~\ref{thm:corregularityinvfunct} says in particular that, for
a smooth function $f$ on $\mathrm{Emb}(M,N)$ which is invariant by diffeomorphisms of $M$, one has a well defined
notion of ``critical point of $f$ in $\widetilde{\mathrm{Emb}}(M,N)$''.
We will say that $[y]$ is a critical point of $f$ in $\widetilde{\mathrm{Emb}}(M,N)$ if
given $x:M\to N$ smooth embedding such that $[y]$ belongs to the domain $\widetilde{\mathcal U}_{x}$
of the chart $\widetilde\Phi_{x}$, then $\widetilde\Phi_{x}\big([y]\big)$ is a critical
point of the smooth function $f\circ\widetilde\Phi_{x}^{-1}$. Corollary~\ref{thm:corregularityinvfunct}
says that this notion does not depend on the choice of the chart around $[y]$; of course, this conclusion
could not be drawn using a change of charts argument.
\end{rem}
When $[x]$ is the class of a smooth
embedding $x:M\to N$, then for all questions of differentiability at $[x]$ a it will be convenient
to use the chart $\widetilde\Phi_{x}$, \emph{centered} at the point $x$. The tangent space
at $[x]$ is described in next:
\begin{lem}\label{thm:tangentspace}
Let $x:M\to N$ be a smooth embedding. The tangent space at the
point $[x]$ to $\widetilde{\mathrm{Emb}}(M,N)$
is identified, via the chart $\widetilde\Phi_{x}$ with the Banach space $\mathbf\Gamma(x^\perp)$
of all $\mathfrak C$-sections of the normal bundle
$x^\perp$. If $r\mapsto x_r\in\mathrm{Emb}(M,N)$ is a $\mathcal C^1$-curve with $x_{r_0}=x$ and
$\frac{\mathrm d}{\mathrm dr}\big\vert_{r=r_0}x_r=V\in\mathbf\Gamma\big(x^*(TN)\big)$ then
$r\mapsto\gamma_r=\widetilde\Phi_{x}\big([x_r]\big)$ is of class $\mathcal C^1$, and
$\frac{\mathrm d}{\mathrm dr}\big\vert_{r=r_0}\gamma_r=V^\perp\in\mathbf\Gamma(x^\perp)$,
where $V^\perp(p)$ is the orthogonal projection of $V(p)$ onto the orthogonal space $x^\perp(p)$, $p\in M$.
\end{lem}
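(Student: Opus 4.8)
The plan is to work entirely in the chart $\widetilde\Phi_x$ centered at $x$, where by construction $\widetilde\Phi_x([x]) = 0$, and to reduce the statement to a computation of the derivative of the map that sends an embedding to the section of $x^\perp$ representing its image. The identification of $T_{[x]}\widetilde{\mathrm{Emb}}(M,N)$ with $\mathbf\Gamma(x^\perp)$ is immediate: $\widetilde\Phi_x$ is a homeomorphism onto an open subset $\widetilde{\mathcal W}_x$ of the Banach space $\mathbf\Gamma(x^\perp)$ containing $0$, and by the definition of the topological manifold structure this is exactly the model space at $[x]$. So the content of the lemma is the formula for $\frac{\mathrm d}{\mathrm dr}\big\vert_{r=r_0}\gamma_r$.

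The key step is to make the map $r\mapsto\gamma_r$ explicit. Fix a tubular neighborhood: there are open sets $U\subset x^\perp$ about the zero section and $V\subset N$ about $x(M)$ with $\exp\colon U\to V$ a diffeomorphism. For $r$ near $r_0$, the embedding $x_r$ is $\mathcal C^1$-close to $x$ (since $r\mapsto x_r$ is a $\mathcal C^1$ curve in $\mathrm{Emb}(M,N)$ and the inclusion $\mathfrak C(M,N)\hookrightarrow\mathcal C^1(M,N)$ is continuous), hence $x_r(M)\subset V$, and $\gamma_r=\widetilde\Phi_x([x_r])$ is characterized by $\exp(\mathrm{graph}\,\gamma_r) = x_r(M)$; equivalently, $\exp^{-1}\circ x_r\colon M\to U$ has image equal to $\mathrm{graph}\,\gamma_r$. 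Concretely, writing $\beta=\pi_\perp\circ\exp^{-1}\colon V\to M$ for the composition with the bundle projection $\pi_\perp\colon x^\perp\to M$, the map $h_r := \beta\circ x_r\colon M\to M$ is a diffeomorphism for $r$ near $r_0$ (it equals $\mathrm{id}_M$ at $r=r_0$, since $x$ itself lies in the zero section $x^{-1}=\beta\circ x$), and one gets $\gamma_r = \exp^{-1}\circ x_r\circ h_r^{-1}$, now genuinely a section of $x^\perp$. This is precisely the ``adjustment by a diffeomorphism of the base'' flagged in Remark~\ref{thm:remlossofdiff}.

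Now I differentiate $r\mapsto\gamma_r$ at $r_0$. The three ingredients — $r\mapsto x_r$ (which is $\mathcal C^1$ by hypothesis, with derivative $V$), $r\mapsto h_r$, and the fixed smooth map $\exp^{-1}$ — combine by the chain rule; the only delicate term is the reparameterization $h_r^{-1}$, and here I exploit the fact that the computation is made at $r=r_0$, where $h_{r_0}=\mathrm{id}_M$ and $\gamma_{r_0}=0$, so that $\frac{\mathrm d}{\mathrm dr}\big\vert_{r_0}(\gamma_r\circ h_r) = \frac{\mathrm d}{\mathrm dr}\big\vert_{r_0}\gamma_r + \mathrm d\gamma_{r_0}\big[\frac{\mathrm d}{\mathrm dr}\big\vert_{r_0}h_r\big] = \frac{\mathrm d}{\mathrm dr}\big\vert_{r_0}\gamma_r$, the second summand vanishing because $\gamma_{r_0}=0$ has zero derivative along $M$. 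Hence differentiating $\exp^{-1}\circ x_r = \gamma_r\circ h_r$ at $r_0$, and using that $\exp^{-1}$ restricted near the zero section over $x(M)$ has differential at points of $x(M)$ equal to the identification of $T_{x(p)}N$ with $x^\perp(p)\oplus\mathrm dx(T_pM)$ followed by projection — more precisely $\mathrm d(\exp^{-1})_{x(p)}$ acts on $T_{x(p)}N$ and its $x^\perp(p)$-component is exactly the orthogonal projection $V(p)\mapsto V^\perp(p)$ — I obtain $\frac{\mathrm d}{\mathrm dr}\big\vert_{r_0}\gamma_r = V^\perp$. The $\mathrm dx(TM)$-component of $V$ gets absorbed into $\frac{\mathrm d}{\mathrm dr}\big\vert_{r_0}h_r$, which is consistent with $h_r$ being the reparameterization correction.

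The main obstacle is the bookkeeping in the last step: one must check that the non-differentiable operations (inversion $h\mapsto h^{-1}$ and left composition by $\exp^{-1}\circ x_r$) do not spoil the existence of the \emph{one-sided} derivative $\frac{\mathrm d}{\mathrm dr}\big\vert_{r_0}\gamma_r$ as an element of $\mathbf\Gamma(x^\perp)$. This is where centering the chart at $x$ is essential: at $r=r_0$ the base diffeomorphism is the identity, so although $r\mapsto h_r^{-1}$ is not $\mathcal C^1$ as a curve in $\mathrm{Diff}(M)$, the composition $\gamma_r\circ h_r^{-1}$ is still $\mathcal C^1$ at $r_0$ in the $\mathfrak C$-topology because $\gamma_{r_0}=0$; the offending derivative-of-$h$ term is multiplied by $\mathrm d\gamma_{r_0}=0$. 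I would make this rigorous by writing $\gamma_r = \exp^{-1}\circ x_r\circ h_r^{-1}$, noting $\|\gamma_r\|_{\mathfrak C} = O(|r-r_0|)$ and $\|h_r - \mathrm{id}\|_{\mathcal C^1} = O(|r-r_0|)$, and then estimating $\gamma_r - \exp^{-1}\circ x_r = \gamma_r\circ h_r^{-1} - \gamma_r = O(\|\gamma_r\|_{\mathcal C^1}\cdot\|h_r-\mathrm{id}\|) = O(|r-r_0|^2)$ in the $\mathfrak C$-norm, so that $\frac{\mathrm d}{\mathrm dr}\big\vert_{r_0}\gamma_r = \frac{\mathrm d}{\mathrm dr}\big\vert_{r_0}(\exp^{-1}\circ x_r) = \mathrm d(\exp^{-1})\circ V$, whose $x^\perp$-component at each $p$ is $V^\perp(p)$. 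The remaining checks — that $r\mapsto\gamma_r$ is $\mathcal C^1$ throughout a neighborhood of $r_0$, not merely differentiable at $r_0$ — follow from the continuity of the same estimates in $r$.
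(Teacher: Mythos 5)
Your route is genuinely different from the paper's. The paper disposes of the lemma in a few lines by asserting that $\gamma_r=P^\perp(\eta_r)$, where $\eta_r=\Phi_{x}(x_r)$ and $P^\perp:\mathbf\Gamma\big(x^*(TN)\big)\to\mathbf\Gamma(x^\perp)$ is the bounded linear pointwise orthogonal projection, so that $r\mapsto\gamma_r$ is $\mathcal C^1$ as the image of a $\mathcal C^1$ curve under a bounded linear map, with derivative $P^\perp V=V^\perp$. You instead work with the exact relation $\gamma_r=\exp^{-1}\circ x_r\circ h_r^{-1}$, $h_r=\pi_\perp\circ\exp^{-1}\circ x_r$, i.e.\ with the reparameterization mechanism of Remark~\ref{thm:remlossofdiff}, and you kill the non-differentiable terms using $\gamma_{r_0}=0$, $h_{r_0}=\mathrm{id}$. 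This is the honest description of the chart (the identity $\gamma_r=P^\perp\eta_r$ used in the paper is exact only to first order at $r_0$), and your identification of $\mathrm d(\exp^{-1})$ along $x(M)$ and of the limit $V^\perp$ is correct; but it means the analytic burden that the paper's shortcut avoids falls entirely on your estimates, and that is exactly where the proposal is thin.

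Concretely, the bound $\|\gamma_r\circ h_r^{-1}-\gamma_r\|_{\mathfrak C}=O\big(\|\gamma_r\|_{\mathcal C^1}\,\|h_r-\mathrm{id}\|\big)$ is a $\mathcal C^0$-type estimate and does not hold in the $\mathfrak C$-norm. Already for $\mathfrak C=\mathcal C^1$ the norm of $\gamma_r\circ h_r-\gamma_r$ contains $\sup\big|(\mathrm d\gamma_r\circ h_r)\,\mathrm dh_r-\mathrm d\gamma_r\big|$, which is governed by the modulus of continuity of $\mathrm d\gamma_r$ at scale $\|h_r-\mathrm{id}\|_\infty$, not by $\|\gamma_r\|_{\mathcal C^1}\|h_r-\mathrm{id}\|$; since $\gamma_r$ is only of class $\mathfrak C$, no such quantitative control is available, the claimed $O(|r-r_0|^2)$ is unjustified, and even $o(|r-r_0|)$ requires an equicontinuity argument for $\mathrm d\gamma_r/(r-r_0)$, which is essentially what is being proved (for $\mathfrak C=\mathcal C^{k,\alpha}$ this is not a formality, since composition is not strongly continuous in H\"older norms). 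Likewise, the final claim that $\mathcal C^1$-regularity of $r\mapsto\gamma_r$ on a whole neighborhood of $r_0$ ``follows from the continuity of the same estimates'' does not work: the cancellation you exploit is special to the center of the chart, while for $r\neq r_0$ the formal derivative is $\big(\partial_r(\exp^{-1}\circ x_r)\big)\circ h_r^{-1}-\mathrm d\gamma_r\big[(\partial_r h_r)\circ h_r^{-1}\big]$, whose second term involves $\mathrm d\gamma_r$ and is one derivative below class $\mathfrak C$, so nothing you proved at $r_0$ propagates to nearby $r$. (Two minor slips: $\gamma_r-\exp^{-1}\circ x_r=\gamma_r-\gamma_r\circ h_r$, not $\gamma_r\circ h_r^{-1}-\gamma_r$; and the relation characterizing the zero section is $\beta\circ x=\mathrm{id}_M$, not $x^{-1}=\beta\circ x$.)
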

\begin{proof}
The domain of $\widetilde\Phi_{x}$ is mapped by $\widetilde\Phi_{x}$ to an open neighborhood of
the zero section of $\mathbf\Gamma(x^\perp)$. The tangent space
is therefore identified with the Banach space itself. Since
$r\mapsto x_r$ is $\mathcal C^1$, then $r\mapsto\eta_r=\Phi_{x}(x_r)$ is $\mathcal C^1$;
now, $\gamma_r=P^\perp(\eta_r)$, where $P^\perp:\mathbf\Gamma\big(x^*(TN)\big)\to\mathbf\Gamma(x^\perp)$
is the bounded linear map defined by $P^\perp(W)=W^\perp$. Thus, $\gamma_r$ is of class $\mathcal C^1$, and
its derivative at $r_0$ is given by $P(V)=V^\perp$.
\end{proof}
\begin{prop}\label{thm:secderinvfunctions}
Let $f:\mathrm{Emb}(M,N)\to\R$ be a smooth function invariant by diffeomorphisms of $M$, and
assume that $x:M\to N$ is a smooth embedding such that $[x]$ is a critical point of
$f$ in $\widetilde{\mathrm{Emb}}(M,N)$ (in the sense of Remark~\ref{thm:remdiffwidetildeEmbk}).
Then, the second variation $\mathrm d^2\big(f\circ\widetilde\Phi_{x}^{-1}\big)(0)$ coincides
with the restriction of the second variation $\mathrm d^2\big(f\circ\Phi_{x}^{-1}\big)(0)$
to the space of $\mathfrak C$-sections of the normal bundle $x^\perp$.
\end{prop}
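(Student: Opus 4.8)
The plan is to observe that, although $\widetilde\Phi_x$ is only a chart of a \emph{topological} manifold, the function $f\circ\widetilde\Phi_x^{-1}$ is nothing but the restriction of the \emph{smooth} function $f\circ\Phi_x^{-1}$ to an open subset of the closed linear subspace $\mathbf\Gamma(x^\perp)\subset\mathbf\Gamma\big(x^*(TN)\big)$; once this is established, the statement is an instance of the elementary fact that the second Fr\'echet derivative of the restriction of a smooth map to a closed subspace is the restriction of the second derivative.

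First I would make precise the relation between the two charts. Both $\Phi_x^{-1}$ and $\widetilde\Phi_x^{-1}$ are built from the \emph{same} formula: for a section $u$ with image in the chosen tubular neighborhood one sets $\Phi_x^{-1}(u)=\big(p\mapsto\exp_{x(p)}(u(p))\big)$ when $u\in\mathbf\Gamma\big(x^*(TN)\big)$, and $\widetilde\Phi_x^{-1}(u)=\big[p\mapsto\exp_{x(p)}(u(p))\big]$ when $u\in\mathbf\Gamma(x^\perp)$. Since the open set $U\subset x^\perp$ used to build $\widetilde\Phi_x$ may be taken inside the domain of the chart $\Phi_x$, after this (harmless) shrinking one has, for every $u$ in the domain ${\mathcal W}_x\subset\mathbf\Gamma(x^\perp)$ of $\widetilde\Phi_x^{-1}$,
\[
\widetilde\Phi_x^{-1}(u)=\big[\Phi_x^{-1}(u)\big].
\]
Letting $\bar f$ denote the function on $\widetilde{\mathrm{Emb}}(M,N)$ induced by the $\mathrm{Diff}(M)$-invariant $f$ (so that $\bar f\circ\widetilde\pi=f$), this gives
\[
\big(f\circ\widetilde\Phi_x^{-1}\big)(u)=\bar f\big(\,[\Phi_x^{-1}(u)]\,\big)=f\big(\Phi_x^{-1}(u)\big)=\big(f\circ\Phi_x^{-1}\big)(u),\qquad u\in{\mathcal W}_x,
\]
so that $f\circ\widetilde\Phi_x^{-1}=\big(f\circ\Phi_x^{-1}\big)\big|_{{\mathcal W}_x}$. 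That $\mathbf\Gamma(x^\perp)$ is a closed, complemented subspace of $\mathbf\Gamma\big(x^*(TN)\big)$ is the content of the splitting used in Lemma~\ref{thm:tangentspace}; in particular the inclusion $\iota:\mathbf\Gamma(x^\perp)\hookrightarrow\mathbf\Gamma\big(x^*(TN)\big)$ is a bounded linear, hence smooth, map, and $f\circ\widetilde\Phi_x^{-1}$ is smooth, recovering Corollary~\ref{thm:corregularityinvfunct}.

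Next I would apply the chain rule. Writing $f_x:=f\circ\Phi_x^{-1}$, we have $f\circ\widetilde\Phi_x^{-1}=f_x\circ\iota$ near $0$, and since $\iota$ is linear one gets $\mathrm d^2\big(f_x\circ\iota\big)(0)=\mathrm d^2 f_x(0)\circ(\iota\times\iota)$, i.e.
\[
\mathrm d^2\big(f\circ\widetilde\Phi_x^{-1}\big)(0)=\mathrm d^2\big(f\circ\Phi_x^{-1}\big)(0)\big|_{\mathbf\Gamma(x^\perp)\times\mathbf\Gamma(x^\perp)},
\]
which is the claimed identity of continuous symmetric bilinear forms on $\mathbf\Gamma(x^\perp)$.

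Two remarks close the argument. The hypothesis that $[x]$ be a critical point of $f$ in $\widetilde{\mathrm{Emb}}(M,N)$ is not actually used in the computation above (which is valid at every point of ${\mathcal W}_x$); its role is conceptual, for by Corollary~\ref{thm:corregularityinvfunct} it forces $\mathrm df_x(0)=0$, so that $\mathrm d^2 f_x(0)$, and hence its restriction, is the genuine, chart-independent Hessian, making ``the second variation at $[x]$'' a well-posed object in the sense of Remark~\ref{thm:remdiffwidetildeEmbk}. As for the main obstacle, there is no serious one: the only point deserving attention is the identification $f\circ\widetilde\Phi_x^{-1}=\big(f\circ\Phi_x^{-1}\big)\big|_{\mathbf\Gamma(x^\perp)}$ itself, namely the observation that \emph{no} right-reparameterization ``adjustment'' of the type described in Remark~\ref{thm:remlossofdiff} enters here. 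That adjustment is what destroys differentiability when comparing two charts $\widetilde\Phi_{x_1},\widetilde\Phi_{x_2}$ of $\widetilde{\mathrm{Emb}}(M,N)$ with one another; here, instead, we compare a chart of $\widetilde{\mathrm{Emb}}(M,N)$ with a chart of $\mathrm{Emb}(M,N)$ related simply by the linear inclusion $\mathbf\Gamma(x^\perp)\hookrightarrow\mathbf\Gamma\big(x^*(TN)\big)$, and along such an inclusion the usual calculus applies verbatim.
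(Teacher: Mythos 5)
Your argument is correct and is essentially the paper's own: both rest on the observation that, in the charts $\Phi_x$ and $\widetilde\Phi_x$ centered at the smooth embedding $x$, the function $f\circ\widetilde\Phi_x^{-1}$ is just the restriction of the smooth $f\circ\Phi_x^{-1}$ to (an open subset of) the closed linear subspace $\mathbf\Gamma(x^\perp)\subset\mathbf\Gamma\big(x^*(TN)\big)$, so the second derivative restricts along the linear inclusion. You simply spell out in more detail (the identity $\widetilde\Phi_x^{-1}(u)=[\Phi_x^{-1}(u)]$, the chain rule through $\iota$, and the role of criticality for chart-independence) what the paper states in one sentence.
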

\begin{proof}
It follows immediately from the fact that, using the local charts $\Phi_{x}$ and
$\widetilde\Phi_{x}$ centered at $x$, then $\widetilde{\mathrm{Emb}}(M,N)$ ($\cong$ sections of the normal
bundle $x^\perp$) is identified with a \emph{linear} subspace of ${\mathrm{Emb}}(M,N)$ ($\cong$ sections of the pull-back bundle $x^*(TN)$).
\end{proof}

\begin{rem}\label{thm:remnotexistencenaturaldiffstruct}
One may wonder whether the set $\widetilde{\mathrm{Emb}}(M,N)$ admits some other \emph{natural} atlas of charts
that are pairwise differentiably compatible, and that make it into a true Banach differentiable manifold.
The existence of such a differentiable structure \emph{fails} if one requires the natural property that
the quotient map $\widetilde\pi:{\mathrm{Emb}}(M,N)\to\widetilde{\mathrm{Emb}}(M,N)$ be a smooth submersion.
Namely, if such a differentiable structure existed, then the inverse image by this projection of points
of $\widetilde{\mathrm{Emb}}(M,N)$, i.e., equivalence classes of embeddings,
would be embedded smooth submanifolds of $\mathrm{Emb}(M,N)$. But as we have observed, equivalence classes of
embeddings $x$ that are only of class $\mathfrak C$ are \emph{not} submanifolds, as they have the same regularity
of the left-composition function $\phi\mapsto x\circ\phi$.
\end{rem}
\end{section}
\begin{section}{Action of the isometry group}
We will now study regularity questions concerning the action of the (connected component of the identity of the)
isometry group $G=\mathrm{Iso}(N,g)$ of the Riemannian manifold
$(N,g)$ on the manifold $\mathrm{Emb}(M,N)$ given by composition on the left, and the corresponding
action on $\widetilde{\mathrm{Emb}}(M,N)$. It is well known (see for instance \cite{Koba}) that
$G$ is a Lie group; if $N$ is compact, then also $G$ is compact.
\begin{prop} The following regularity properties hold for the action of $\mathrm{Iso}(N,g)$.
\begin{enumerate}
\item\label{itm:s1} The action of $\mathrm{Iso}(N,g)$ on $\mathrm{Emb}(M,N)$ is by smooth diffeomorphisms.
\item\label{itm:s2} The corresponding action on $\widetilde{\mathrm{Emb}}(M,N)$ is by homeomorphisms.
\item\label{itm:s3} If $x:M\to N$ is a smooth embedding, then the map
\[\beta_x:\mathrm{Iso}(N,g)\longrightarrow\widetilde{\mathrm{Emb}}(M,N)\]
defined by $\beta_x(\psi)=\psi\cdot[x]$ is a smooth injective immersion on a neighborhood of the identity
(when represented in any of the local charts described in
Subsection~\ref{sub:tilde}).
\item\label{itm:s4} The local charts of $\widetilde{\mathrm{Emb}}(M,N)$ described in Subsection~\ref{sub:tilde},
restricted to the orbit $\mathrm{Iso}(N,g)\cdot[x]$ of a smooth embedding $x:M\to N$ are
differentiably compatible, and they define a differentiable structure on the orbit of $[x]$ in
$\widetilde{\mathrm{Emb}}(M,N)$. The action of $\mathrm{Iso}(N,g)$ on this orbit
is smooth, and this orbit is diffeomorphic to the quotient $\mathrm{Iso}(N,g)/H_x$,
where $H_x$ is the isotropy group of $[x]$.
\end{enumerate}
\end{prop}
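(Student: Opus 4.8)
The strategy is to treat the four assertions in turn, reducing each to the smoothness of composition operators recalled in Section~\ref{sec:diffstructure} (a $\mathfrak C$-section composed with a smooth vector bundle morphism depends smoothly on the section) together with the explicit transition formula for the charts $\widetilde\Phi_x$. For \eqref{itm:s1} I would fix $\psi\in G$ and read the left-composition map $L_\psi\colon x\mapsto\psi\circ x$ in the charts $\Phi_x$ of the source and $\Phi_{\psi\circ x}$ of the target; since $\psi$ is an isometry, $\psi\circ\exp_{x(p)}=\exp_{\psi(x(p))}\circ\mathrm d\psi_{x(p)}$, so in these charts $L_\psi$ becomes $u\mapsto\mathrm d\psi\circ u$, composition with the smooth vector bundle isomorphism $\mathrm d\psi\colon x^*(TN)\to(\psi\circ x)^*(TN)$ covering $\mathrm{id}_M$, hence smooth; as $L_{\psi^{-1}}$ is a smooth two-sided inverse, $L_\psi$ is a diffeomorphism. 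For \eqref{itm:s2}, the induced map $\widetilde L_\psi([x])=[\psi\circ x]$ is well defined ($y=x\circ\phi$ gives $\psi\circ y=(\psi\circ x)\circ\phi$) and satisfies $\widetilde\pi\circ L_\psi=\widetilde L_\psi\circ\widetilde\pi$; since $\widetilde\pi$ is a quotient map and $\widetilde\pi\circ L_\psi$ is continuous, $\widetilde L_\psi$ is continuous, and $\widetilde L_{\psi^{-1}}$ is a continuous inverse.

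The heart of the matter is \eqref{itm:s3}. I would represent $\beta_x$ near the identity in the chart $\widetilde\Phi_x$ centered at $x$. For $\psi$ close to $\mathrm{id}$ one has $\psi(x(p))\in V$ (the tubular neighbourhood of $x(M)$), so $\exp^{-1}(\psi(x(p)))\in U\subset x^\perp$; writing $\phi_\psi:=\pi^\perp\circ\exp^{-1}\circ\psi\circ x\colon M\to M$ for its base point and $\xi_\psi(p):=\exp^{-1}(\psi(x(p)))\in x^\perp\big(\phi_\psi(p)\big)$, one gets $\widetilde\Phi_x(\beta_x(\psi))=u_\psi$ with
\[u_\psi=\xi_\psi\circ\phi_\psi^{-1}.\]
This is structurally the formula of Remark~\ref{thm:remlossofdiff}, with $\phi_\psi$ in the role of the ``adjustment'' diffeomorphism; the crucial difference is that here $\phi_\psi$ is a \emph{smooth} diffeomorphism of $M$ (because $x$ and $\psi$ are smooth and $\phi_{\mathrm{id}}=\mathrm{id}_M$), so $(\psi,q)\mapsto\phi_\psi^{-1}(q)$ is smooth by the finite-dimensional inverse function theorem and the loss of differentiability does not arise: $(\psi,q)\mapsto u_\psi(q)$ is a smooth, section-valued map $G\times M\to x^\perp$, hence, $G$ being finite-dimensional, $\psi\mapsto u_\psi\in\mathbf\Gamma(x^\perp)$ is smooth (this is the parametrized form of the composition lemma of \cite{Palais}). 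Differentiating at $\mathrm{id}$ along the one-parameter subgroup $t\mapsto\psi_t$ generated by $\xi\in\mathrm{Lie}(G)$ and invoking Lemma~\ref{thm:tangentspace} gives $\mathrm d\beta_x\big\vert_{\mathrm{id}}(\xi)=(X^\xi\circ x)^\perp$, where $X^\xi$ is the Killing field on $N$ induced by $\xi$; its kernel is the set of $\xi$ with $X^\xi$ everywhere tangent to the compact submanifold $x(M)$, i.e.\ (since then the flow $\psi_t$ preserves $x(M)$) exactly $\mathfrak h_x=\mathrm{Lie}(H_x)$, where $H_x=\{\psi\in G:\psi(x(M))=x(M)\}$ is closed, hence a Lie subgroup. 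Finally $\beta_x(\psi_1)=\beta_x(\psi_2)$ iff $\psi_2^{-1}\psi_1\in H_x$; thus when $H_x$ is discrete $\beta_x$ is a smooth injective immersion near $\mathrm{id}$, and in general it descends to a smooth injective immersion of $G/H_x$ near the identity coset.

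For \eqref{itm:s4} I would bootstrap from \eqref{itm:s3}. Every point of the orbit $\mathcal O=G\cdot[x]$ has the form $[\psi_0\circ x]$ with $\psi_0\circ x$ a \emph{smooth} embedding, so the computation above applies verbatim with $\widetilde\Phi_x$ replaced by the chart $\widetilde\Phi_y$ centered at any smooth embedding $y$ with $[y]\in\mathcal O$: the map $\psi\mapsto\widetilde\Phi_y([\psi\circ x])$ is smooth on its domain in $G$. Being constant on the cosets of $H_x$ and $G\to G/H_x$ a smooth submersion, it descends to a smooth map on an open subset of $G/H_x$. Transporting the homogeneous-space structure of $G/H_x$ to $\mathcal O$ via the bijection $\bar\beta_x\colon G/H_x\to\mathcal O$ then makes every restriction $\widetilde\Phi_y\big\vert_{\mathcal O}$ smooth (and, when $N$, hence $G$ and $H_x$, is compact, a smooth embedding onto a finite-dimensional submanifold of $\mathbf\Gamma(y^\perp)$); hence these restricted charts are pairwise differentiably compatible, the induced action $G\times\mathcal O\to\mathcal O$ corresponds to the smooth homogeneous action $G\times G/H_x\to G/H_x$, and $\mathcal O$ is by construction diffeomorphic to $\mathrm{Iso}(N,g)/H_x$.

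The step I expect to be the main obstacle is the smoothness assertion inside \eqref{itm:s3}: since $\widetilde\pi$ is merely a quotient map, smoothness of $\beta_x$ cannot be read off from the (easy) smoothness of $\psi\mapsto\psi\circ x$ in $\mathrm{Emb}(M,N)$, and must be established directly in the charts $\widetilde\Phi_x$. The point to get right is that, although the generic transition map of Remark~\ref{thm:remlossofdiff} loses differentiability precisely because the reparametrizing diffeomorphism is only of class $\mathfrak C$, along the orbit this diffeomorphism $\phi_\psi$ is automatically smooth; isolating $\phi_\psi$, checking its smoothness, and feeding it into a parametrized composition-operator estimate is the technical core of the proposition.
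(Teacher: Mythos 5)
Your argument follows essentially the same route as the paper's (much terser) proof: parts \eqref{itm:s1} and \eqref{itm:s2} are handled in the same way, and for \eqref{itm:s3}--\eqref{itm:s4} the common key point is that along the orbit of a \emph{smooth} embedding the reparametrizing diffeomorphism appearing in the chart representation (your $\phi_\psi$, the paper's $h_u$ of Remark~\ref{thm:remlossofdiff}) is itself smooth, so the loss of differentiability of the general transition maps does not occur; your explicit formula $u_\psi=\xi_\psi\circ\phi_\psi^{-1}$ together with the parametrized composition lemma simply makes precise what the paper compresses into ``involves only compositions of $\psi$ with smooth diffeomorphisms''. Two points of genuine divergence are worth recording. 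First, in \eqref{itm:s3} you correctly note that when the stabilizer $H_x$ has positive dimension, $\beta_x$ is neither injective nor an immersion on any neighborhood of the identity (it is constant on $H_x$-cosets and its differential, which by your computation is $\xi\mapsto(X^\xi\circ x)^\perp$, has kernel $\mathfrak h_x$); the paper's statement and proof gloss over this, and your corrected formulation --- a smooth injective immersion of $\mathrm{Iso}(N,g)/H_x$ near the identity coset, or of the group itself when $H_x$ is discrete --- is what is actually used in \eqref{itm:s4}. Second, for \eqref{itm:s4} the paper restricts the transition functions of the atlas to the orbit and observes they are smooth there, whereas you transport the homogeneous-space structure of $\mathrm{Iso}(N,g)/H_x$ through $\bar\beta_x$ and verify that each restricted chart is smooth for it; the two arguments are equivalent, and yours yields the diffeomorphism with $\mathrm{Iso}(N,g)/H_x$ directly, though (like the paper) you should note that an injective immersion need not be a homeomorphism onto its image, so the differentiable compatibility of the restricted charts is with respect to the orbit's immersed-manifold structure (no issue when $N$, hence $\mathrm{Iso}(N,g)$, is compact).
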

\begin{proof}
Isometries of $(N,g)$ are smooth.
Part~\eqref{itm:s1} follows from the fact that left-composition with smooth maps is smooth on
$\mathrm{Emb}(M,N)$; the inverse of left-composition by
$\psi$ is left-composition by $\psi^{-1}$.
As to the map $\widetilde{\mathrm{Emb}}(M,N)\ni [y]\mapsto[\psi\circ y]\in \widetilde{\mathrm{Emb}}(M,N)$, this is continuous, but not smooth.
Namely, given the $\mathfrak C$-section  $u=\widetilde\Phi_{x}(y)$ of $x^\perp$,
then the map $\exp^{-1}\circ\psi\circ\exp(u)$ is a map of class $\mathfrak C$ between open subsets of
$x^\perp$, but it is not a section. Thus, when representing the composition $[\psi\circ y]$ in local charts,
a right composition with a diffeomorphism is needed, which as observed in Subsection~\ref{sub:tilde}
is not smooth, but only continuous. This proves part~\eqref{itm:s2}.
For part~\eqref{itm:s3}, observe that the composition of $\beta_x$ and the local charts $\widetilde\Phi_{y}$
applied to $\psi$ involves only compositions of $\psi$ with smooth diffeomorphism, and it is therefore a smooth
injective immersion of (an open neighborhood of the identity in)
$\mathrm{Iso}(N,g)$. To prove part~\eqref{itm:s4}, observe that, by~\eqref{itm:s3}, the intersection
of the orbit $\mathrm{Iso}(N,g)\cdot[x]$ with the domain of a chart $\widetilde\Phi_{y}$
is an immersed submanifold. Since the orbit of a smooth embedding consists only of classes of smooth embeddings, then
the transition functions restrict to smooth maps at every point of the orbit. Smoothness of the action
on this orbit also follows easily.
\end{proof}
It is an easy observation that, for all $x\in\mathrm{Emb}(M,N)$, the stabilizer
of $[x]$ in $\mathrm{Iso}(N,g)$ is the subgroup consisting of all isometries
$\psi$ that preserve the image $x(M)$, i.e., such that $\psi\big(x(M)\big)=x(M)$.
\end{section}

\end{document}